\newtheorem*{Theorem}{Theorem~\ref{main.thm}}
\newtheorem{theorem}{Theorem}[section]
\newtheorem{proposition}[theorem]{Proposition}
\newtheorem{lemma}[theorem]{Lemma}
\newtheorem{corollary}[theorem]{Corollary}
\theoremstyle{definition}
\theoremstyle{remark}
\theoremstyle{remark}
\def\({{\rm (}}
\def\){{\rm )}}
\let\Mathrm\operator@font
\newcommand{\fm}{\ensuremath{\mathfrak m}}
\newcommand{\fn}{\ensuremath{\mathfrak n}}
\newcommand{\fp}{\ensuremath{\mathfrak p}}
\def\standop#1{\mathop{\Mathrm #1}\nolimits}
\def\difstop#1#2{\expandafter\def\csname #1\endcsname{\standop{#2}}}
\def\defstop#1{\difstop{#1}{#1}}
\def\({{\rm(}}
\def\){{\rm)}}
\def\fm{\mathfrak{m}}
\def\fp{\mathfrak{p}}
\def\ZZ{{\mathbb{Z}}}
\def\sdarrow#1{\downarrow\hbox to 0pt{\scriptsize$#1$\hss}}
\def\suarrow#1{\uparrow\hbox to 0pt{\scriptsize$#1$\hss}}
\def\ssearrow#1{\searrow\hbox to 0pt{\scriptsize$#1$\hss}}
\def\section{\@startsection{section}{1}{\z@ }%
  {-3.5ex plus -1ex minus -.2ex}{2.3ex plus .2ex}{\bf }}
\long\def\refname{\par\kern -3ex
  \begin{center}\rm R\sc{eferences}\end{center}\par\kern 
  -2ex}
\def\@seccntformat#1{\csname the#1\endcsname.\quad}
\def\@@@sect#1#2#3#4#5#6[#7]#8{%
  \ifnum #2>\c@secnumdepth 
  \def \@svsec {}\else \refstepcounter {#1}%
  \def\@svsec{}
  \fi 
  \@tempskipa #5\relax 
  \ifdim \@tempskipa >\z@ 
  \begingroup #6\relax \@hangfrom {\hskip #3\relax 
    \@svsec}{\interlinepenalty \@M #8\par }\endgroup 
  \csname #1mark\endcsname {#7}
  \else 
  \def \@svsechd {#6\hskip #3\@svsec #8\csname #1mark\endcsname {#7}}
  \fi \@xsect {#5}}
\def\@@@startsection#1#2#3#4#5#6{%
  \if@noskipsec \leavevmode \fi \par \@tempskipa #4\relax \@afterindenttrue 
  \ifdim \@tempskipa <\z@ \@tempskipa -\@tempskipa \@afterindentfalse 
  \fi \if@nobreak \everypar {}\else \addpenalty {\@secpenalty }\addvspace 
  {\@tempskipa }\fi \@ifstar {\@ssect {#3}{#4}{#5}{#6}}{\@dblarg 
    {\@@@sect {#1}{#2}{#3}{#4}{#5}{#6}}}}
\def\theparagraph{\thesection.\arabic{paragraph}}
\def\aparagraph{\@@@startsection{paragraph}{2}{\z@ }%
  {1.75ex plus .2ex minus .15ex}{-1em}{\bf(\theparagraph) } }
\def\paragraph{\@@@startsection{paragraph}{2}{\z@ }%
  {1.75ex plus .2ex minus .15ex}{-1em}{}{\bf(\theparagraph)} }
\let\c@theorem\c@paragraph
\title{Hochster--Eagon type theorem\\
  for Serre's $(S_n)$ condition}
\author{Mitsuyasu H{\sc ashimoto}\thanks{Partially supported by JSPS KAKENHI Grant number 20K03538
    and MEXT Promotion of Distinctive Joint Research Center Program JPMXP0619217849.}}
\date{}
\begin{document}

\maketitle
\footnote[0]
{2020 \textit{Mathematics Subject Classification}. 
  Primary 13E05; Secondary 13A50.
  Key Words and Phrases.
  pure homomorphism, Serre's $(S_n)$ condition, Hochster--Eagon theorem
}

\begin{abstract}
  Let $(A,\fm)\rightarrow (B,\fn)$ be a pure homomorphism between Noetherian commutative rings.
  If $B/\fm B$ is an Artinian ring, then we have $\dim A=\dim B$ and $\depth A\geq \depth B$.
  Using this version of Hochster--Eagon theorem,
  we prove the following.
  Let $A\rightarrow B$ be a pure homomorphism between Noetherian commutative
  rings. Assume that the fiber ring $\kappa(\fp)\otimes_A B$ is Artinian for each $\fp\in\Spec A$,
  and $B$ satisfies Serre's $(S_n)$ condition.
  Then $A$ also satisfies Serre's $(S_n)$ condition.
  In particular, if a finite group $G$ acts on $B$ and the order $|G|$ of $G$ is invertible in $B$,
  and if $B$ is Noetherian with the $(S_n)$ condition, then the ring of invariants $A=B^G$
  also satisfies the $(S_n)$ condition.
\end{abstract}

\section{Introduction}

We say that a homomorphism of commutative rings $f:A\rightarrow B$ is {\em pure} if for any
$A$-module $W$, the map $j_W:W\rightarrow B\otimes_AW$ given by $j_W(w)=1\otimes w$ is injective.
If $f:A\rightarrow B$ is pure, many good ring-theoretic properties of $B$ are inherited by $A$.
For example, if $B$ is Noetherian (resp.\ a normal domain), then so is $A$ \cite[(6.15)]{HR}.
The Hochster--Eagon theorem \cite[Proposition~12]{HE}, a striking result in this direction
appeared in 1971, states that if $f: A\rightarrow B$ is an integral homomorphism between commutative
rings, $B$ is Noetherian and Cohen--Macaulay, and $A$ is a direct summand subring of $B$ through $f$
(that is, there is a left inverse $g:B\rightarrow A$ as
an $A$-linear map (that is, $gf=1_A$)), then $A$ is also Noetherian and Cohen--Macaulay.
As an application, Hochster and Eagon proved that the non-modular invariant subring of a finite
group action on a Cohen--Macaulay ring is again Cohen--Macaulay \cite{HE}.
The assumption of integral extension is necessary, as any finitely generated domain over a field,
Cohen--Macaulay or not, is a direct summand subring of a finitely generated Cohen--Macaulay domain
by Kawasaki's arithmetic Macaulayfication theorem \cite[Theorem~1.3]{Kawasaki}.
Although Cohen--Macaulay property is not inherited by a pure subring in genral,
some important results in this direction are known, see \cite{Boutot,HH,HR,Schoutens}.

In this paper, we prove an analog of Hochster--Eagon theorem for Serre's $(S_n)$ condition.
We say that a Noetherian commutative ring $R$ satisfies the condition $(S_n)$ if
$\depth R_\fp\geq \min(n,\dim R_\fp)$ for any $\fp\in\Spec R$.
This is equivalent to say that for any $\fp\in\Spec R$ such that $\depth R_\fp<n$, we have $R_\fp$ is Cohen--Macaulay.
Our main theorem is the following.

\begin{Theorem}
  Let $f:A\rightarrow B$ be a pure ring homomorphism.
  Let $B$ be a Noetherian ring which satisfies Serre's $(S_n)$ condition.
  If the fiber ring $B\otimes_A \kappa(\fp)$ is zero-dimensional for any prime
  ideal $\fp$ of $A$, then $A$ satisfies $(S_n)$, too.
\end{Theorem}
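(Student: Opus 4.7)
The plan is to reduce to the local-to-local setting covered by the Proposition stated at the top of the abstract, and then to transfer depth and dimension information from an appropriate local factor of $B$ back to $A$.

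Fix $\fp\in\Spec A$; I must show $\depth A_\fp\geq\min(n,\dim A_\fp)$. Base-changing $A\rightarrow B$ along $A\rightarrow A_\fp$ preserves purity (pureness being stable under arbitrary base change), Noetherianity of the target (a localization of $B$), Serre's condition $(S_n)$, and Artinianity of the fiber over $\fp$. So I may assume from the outset that $(A,\fm)$ is a Noetherian local ring and reduce to proving $\depth A\geq\min(n,\dim A)$.

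Since $B/\fm B$ is Artinian, the primes $\fn_1,\ldots,\fn_r$ of $B$ lying over $\fm$ are finitely many, and each is a maximal ideal of $B$ (any strict extension contracting to $\fm$ would give a strict chain in the zero-dimensional ring $B/\fm B$). I want to apply the Proposition to a local-to-local pure map whose target is, or closely relates to, some $B_{\fn_i}$. Since the localization $B\rightarrow B_{\fn_i}$ does not generally preserve purity, I would instead pass to the $\fm$-adic completion: the map $\hat A\rightarrow\hat A\otimes_A B$ is pure by base change of the original pure $A\rightarrow B$ along the faithfully flat $A\rightarrow\hat A$, and $B^\wedge_\fm$ decomposes canonically as $\prod_{i=1}^r\widehat{B_{\fn_i}}$, because the orthogonal idempotents of $B/\fm B$ lift. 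The aim is then to extract, from this purity and product structure, a local-to-local pure map $\hat A\rightarrow\widehat{B_{\fn_i}}$ for a suitable index $i$, with its automatically Artinian closed fiber.

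Granted such a pure $\hat A\rightarrow\widehat{B_{\fn_i}}$, the Proposition yields $\dim\hat A=\dim\widehat{B_{\fn_i}}$ and $\depth\hat A\geq\depth\widehat{B_{\fn_i}}$. Invariance of dimension and depth under completion of Noetherian local rings, combined with the $(S_n)$-hypothesis on $B$, then gives
\[
\depth A=\depth\hat A\geq\depth\widehat{B_{\fn_i}}=\depth B_{\fn_i}\geq\min(n,\dim B_{\fn_i})=\min(n,\dim A),
\]
as required. The main obstacle is the descent step: producing the correct index $i$ and verifying that purity of $\hat A\rightarrow\prod_i\widehat{B_{\fn_i}}$ descends to purity into a single local factor. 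This will likely require delicate input, possibly through the structure of $\Ass_A B$ and a Hensel-type argument on lifted idempotents, and is the place where the bulk of the technical work is concentrated.
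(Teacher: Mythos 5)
Your overall strategy is sound, and your concluding chain of inequalities is actually cleaner than the paper's (it verifies $\depth A_\fp\geq\min(n,\dim A_\fp)$ directly from the dimension equality and the depth inequality, without needing a separate Cohen--Macaulay descent theorem). But the proof is not complete: the step you yourself flag as ``the place where the bulk of the technical work is concentrated'' --- producing a single prime $P$ of $B$ over $\fp$ for which $A_\fp\rightarrow B_P$ is pure --- is the genuine mathematical content of the reduction, and you have not supplied it. The difficulty is real: purity of $\hat A\rightarrow\prod_i C_i$ only says that for each module $W$ and each $w\neq 0$ the image of $w$ survives in \emph{some} factor $C_i\otimes W$, with $i$ a priori depending on $(W,w)$; nothing in your outline forces one index to work for all test modules simultaneously. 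Your proposed detour through the $\fm$-adic completion also introduces a mismatch you do not address: the pure map obtained by base change is $\hat A\rightarrow \hat A\otimes_A B$, which is not in general the $\fm B$-adic completion of $B$, so the product decomposition into the $\widehat{B_{\fn_i}}$ does not automatically live on the ring you know to be pure over $\hat A$.

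The paper closes this gap with a short lemma that needs no completion at all: for $(A,\fm)$ Noetherian local and $A\rightarrow B$ pure, purity is equivalent (by Hochster--Roberts (6.11)) to the single condition that the image of the socle generator $1\in A/\fm\subset E=E_A(A/\fm)$ is nonzero in $B\otimes_A E$. A nonzero element of a $B$-module remains nonzero after localizing at some maximal ideal $\fn$ of $B$, so $A\rightarrow B_\fn$ is again pure by the same criterion, and surjectivity of $\Spec B_\fn\rightarrow\Spec A$ forces $\fn\cap A=\fm$. Iterating this for $A_\fp$ gives exactly the local-to-local pure map $A_\fp\rightarrow B_P$ with Artinian closed fiber that your argument requires. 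If you replace your completion-and-idempotents step by this injective-hull argument, the rest of your proof goes through and, as noted, even shortens the endgame.
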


If $f:(A,\fm)\rightarrow (B,\fn)$ is a pure local homomorphism such that $B/\fm B$ is Artinian,
then we have that $\dim A=\dim B$ and $\depth A\geq \depth B$ (Proposition~\ref{depth.prop}),
and the proof of Theorem~\ref{main.thm} is reduced to this version of Hochster--Eagon theorem.
As a corollary, we have a result in non-modular invariant theory of finite groups;
if a finite group $G$ acts on a Noetherian ring $B$ and
the order $|G|$ of $G$ is invertible in $B$, the $(S_n)$ property of $B$ is
inherited by $A$ for any $n\geq 1$ (Corollary~\ref{main.cor}).

\section{The results}

\paragraph
Let $R$ be a commutative ring and $f:M\rightarrow N$ be an $R$-linear map between $R$-modules.
We say that $f$ is {\em pure} or $R$-pure if for any $R$-module $W$, the map
$f\otimes 1_W: M\otimes_R W\rightarrow N\otimes_R W$ is injective.
Clearly, a pure linear map is injective.
A ring homomorphism between commutative rings $\varphi:A\rightarrow B$ is said to be pure
if it is so as an $A$-linear map.
For each $A$-module $W$, we denote by $j_W:W=A\otimes_AW\rightarrow B\otimes_A W$
the map $\varphi\otimes 1_W$.
So $\varphi$ is pure if and only if $j_W$ is injective for any $A$-module $W$.
If $A$ is a subring of $B$ and the inclusion map $A\hookrightarrow B$ is pure,
then we say that $A$ is a pure subring of $B$.

\paragraph
Let $\varphi:A\rightarrow B$ be a pure ring homomorphism, and $\fp \in\Spec A$.
Then $\kappa(\fp)=A\otimes_A\kappa(\fp)\rightarrow B\otimes_A\kappa(\fp)$ is injective, and hence
the fiber ring $B\otimes_A\kappa(\fp)$ is not zero, where $\kappa(\fp)= A_\fp/\fp A_\fp$ is the
residue field of $A_\fp$.
It follows that ${}^a\varphi:\Spec B\rightarrow \Spec A$ is surjective,
since $({}^a\varphi)^{-1}(\fp)\cong \Spec(B\otimes_A\kappa(\fp))\neq\emptyset$ for each $\fp$.
In particular, a pure ring homomorphism between local rings is a local homomorphism.

\paragraph
Let $\varphi:A\rightarrow B$ be a pure homomorphism.
If $B$ is Noetherian, then so is $A$ \cite[(6.15)]{HR}.

\begin{lemma}\label{pure-local.lem}
  Let $(A,\fm)$ be a Noetherian local ring, and $f:A\rightarrow B$ be a pure
  ring homomorphism.
  Then there exists some maximal ideal $\fn$ of $B$ such that $\fn\cap A=\fm$, and
  $A\rightarrow B_\fn$ is pure.
\end{lemma}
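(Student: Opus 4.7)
The statement has two components: existence of a maximal ideal $\fn$ of $B$ with $\fn\cap A=\fm$, and purity of the localization map $A\to B_\fn$. I would address them in turn.

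For existence, apply purity of $f$ to the cyclic $A$-module $A/\fm$: this gives an injection $A/\fm\hookrightarrow B\otimes_A A/\fm=B/\fm B$, so $B/\fm B$ is nonzero and admits some maximal ideal $\bar\fn$. Letting $\fn$ be its preimage in $B$, the quotient $B/\fn\cong(B/\fm B)/\bar\fn$ is a field, so $\fn$ is maximal in $B$; and $\fn\supseteq\fm B$ forces $\fn\cap A$ to be a proper ideal of $A$ containing $\fm$, hence $\fn\cap A=\fm$.

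For purity, I would argue in two stages. First, $A\to B_S$ is pure for $S=A\setminus\fm$: since $A$ is local, $S\subseteq A^\times$, so if $w\in W$ maps to zero in $B_S\otimes_A W=S^{-1}(B\otimes_A W)$, then $s\otimes w=1\otimes sw=0$ in $B\otimes_A W$ for some $s\in S$; purity of $A\to B$ yields $sw=0$, and $w=0$ since $s$ is a unit. Because any maximal $\fn$ of $B$ over $\fm$ is maximal among primes of $B$ disjoint from $S$, the two localizations satisfy $(B_S)_{\fn B_S}=B_\fn$; after replacing $B$ by $B_S$, every element of $A\setminus\fm$ is a unit in $B$, without loss of generality. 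Second, I would apply Zorn's lemma to the poset
\[
\Sigma=\{T\subseteq B: T\text{ multiplicatively closed},\ T\cap\fm B=\emptyset,\ A\to T^{-1}B\text{ is pure}\},
\]
which contains $\{1\}$ (since $\fm B\cap A=\fm$ by cyclic purity prevents $1\in\fm B$) and is closed under directed unions, because purity passes to filtered colimits of $A$-algebras: $T^{-1}B\otimes_A W=\indlim T_\lambda^{-1}B\otimes_A W$, and an element of such a colimit vanishes iff it vanishes at some finite stage. A maximal $T_0\in\Sigma$, which I would replace by its saturation, has complement $B\setminus T_0$ a union of primes all containing $\fm B$. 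I would then argue that $B\setminus T_0$ reduces to a single maximal ideal $\fn$ of $B$ lying over $\fm$, so $T_0^{-1}B=B_\fn$ and purity of $A\to B_\fn$ is immediate from $T_0\in\Sigma$.

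The main obstacle is the final identification in the Zorn step: verifying that $B\setminus T_0$ is a single maximal ideal of $B$ rather than a nontrivial union of primes, and that it lies over $\fm$. This requires combining the maximality of $T_0$ with a prime-avoidance-style argument ensuring that $\fm B$ is contained in exactly one maximal prime $\fn$ of $B\setminus T_0$, which is then the desired maximal ideal; equivalently, one must show that enlarging $T_0$ to any $B\setminus\fn'$ with $\fn'\neq\fn$ maximal over $\fm$ would break either the disjointness from $\fm B$ or the purity condition.
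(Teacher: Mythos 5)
There is a genuine gap, and it sits exactly where you flag it: the final step of your Zorn argument. Maximality of $T_0$ in $\Sigma$ does not by itself imply that the saturated complement $B\setminus T_0$ is a single prime, let alone a maximal ideal of $B$. To rule out two incomparable maximal primes $P_1,P_2$ disjoint from $T_0$, you would have to enlarge $T_0$ by some $b\in P_1\setminus P_2$ and verify that $A\to (T_0\cdot\{b^n\})^{-1}B$ is still pure; but purity is a condition quantified over \emph{all} $A$-modules $W$, and there is no evident way to show that some such $b$ works simultaneously for every $W$. (Your parenthetical claim that all primes in $B\setminus T_0$ contain $\fm B$ is also unjustified: disjointness of $T_0$ from $\fm B$ puts $\fm B$ inside the complement, but says nothing about the individual primes making up that complement.) Even granting the identification $T_0^{-1}B=B_\fn$, you would still owe an argument that $\fn$ is a maximal ideal of $B$ rather than merely a prime containing $\fm B$.

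The missing idea is precisely the one the paper uses: by Hochster--Roberts \cite[(6.11)]{HR}, for a Noetherian local ring $(A,\fm)$ with $E=E_A(A/\fm)$ the injective hull of the residue field, purity of $A\to C$ is equivalent to the non-vanishing of $j_E(1)$ in $C\otimes_AE$, where $1$ is the socle generator. Since $E$ is an essential extension of $A/\fm$, purity is thereby detected by a \emph{single element of a single test module}. One then invokes the standard fact that a nonzero element of a $B$-module remains nonzero after localizing at some maximal ideal $\fn$ of $B$, applies \cite[(6.11)]{HR} again to conclude $A\to B_\fn$ is pure, and gets $\fn\cap A=\fm$ from surjectivity of $\Spec B_\fn\to\Spec A$. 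Your Part 1 correctly produces \emph{some} maximal ideal over $\fm$, but not one compatible with purity, which is the entire content of the lemma; and the first stage of your Part 2 is vacuous, since $A\setminus\fm=A^\times$ already maps into $B^\times$, so $B_S=B$. I recommend replacing the Zorn argument with the injective-hull criterion.
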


\begin{proof}
  Let $E=E_A(A/\fm)$ be the injective hull of the residue field of $A$.
  Then $j_E(1)\neq 0$ in $B\otimes_A E$ \cite[(6.11)]{HR},
  where $1$ is the image of $1\in A/\fm$ in $E$.
  So there exists some maximal ideal $\fn $ of $B$ such that $j_E(1)$ is still nonzero
  in $B_\fn\otimes_A E$.
  Then $A\rightarrow B_\fn$ is pure by \cite[(6.11)]{HR} again.
  As $\Spec B_\fn\rightarrow \Spec A$ is surjective, we must have $\fn\cap A=\fm$.
\end{proof}

\begin{lemma}\label{dim1.lem}
  If $A$ is a Noetherian ring and $f:A\rightarrow B$ is a pure ring homomorphism,
  then $\dim A\leq \dim B$.
\end{lemma}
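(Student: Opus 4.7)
The plan is to reduce the inequality to the case of a pure local homomorphism via Lemma~\ref{pure-local.lem}, and then to prove that local case by induction on the dimension of the source. For the reduction, I observe that for every $\fp\in\Spec A$ the base change $A_\fp\to B\otimes_A A_\fp$ is pure, and applying Lemma~\ref{pure-local.lem} to this map (together with the standard correspondence between maximal ideals of $B\otimes_A A_\fp$ and primes of $B$ contracting into $\fp$) produces a prime $\fq\in\Spec B$ with $\fq\cap A=\fp$ such that $A_\fp\to B_\fq$ is a pure local homomorphism. Granting the local inequality $\dim B_\fq\geq \dim A_\fp$, I conclude $\dim B\geq \height\fq\geq\dim A_\fp=\height\fp$, and taking the supremum over $\fp$ gives $\dim B\geq\dim A$ (this also covers the case $\dim A=\infty$, by using $\fp$ of arbitrarily large height).

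It then suffices to prove the local statement: if $(A,\fm)\to(B,\fn)$ is a pure local homomorphism with $A$ Noetherian, then $\dim B\geq \dim A$. I would induct on $d=\dim A$; the base $d=0$ is immediate since pure maps are injective, so $B\neq 0$. For the inductive step, choose a chain $\fp_0\subsetneq\cdots\subsetneq \fp_{d-1}\subsetneq \fp_d=\fm$ in $A$; since $\fp_{d-1}\subsetneq \fm$, one has $\height\fp_{d-1}=d-1$. Applying Lemma~\ref{pure-local.lem} to the pure map $A_{\fp_{d-1}}\to B\otimes_A A_{\fp_{d-1}}$ yields $\fq'\in\Spec B$ with $\fq'\cap A=\fp_{d-1}$ and $A_{\fp_{d-1}}\to B_{\fq'}$ pure local, whence the inductive hypothesis gives $\height\fq'=\dim B_{\fq'}\geq d-1$. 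Because $\fq'\cap A=\fp_{d-1}\neq \fm=\fn\cap A$, we have $\fq'\neq\fn$; since $\fn$ is the only maximal ideal of $B$, this forces $\fq'\subsetneq\fn$, and therefore $\height\fn\geq \height\fq'+1\geq d$, giving $\dim B\geq d$.

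The main obstacle I anticipate is that the surjectivity of $\Spec B\to\Spec A$, which is all that is immediately at hand from purity, does not by itself lift chains of primes --- a pure homomorphism need not satisfy going-down in general. Lemma~\ref{pure-local.lem} is precisely the device that circumvents this, refining lying-over into a lying-over that preserves purity after localization, which is exactly what is needed to iterate the inductive hypothesis. This dictates the direction of the induction: one must descend from the top of the chain by localizing $A$ at $\fp_{d-1}$ and invoking Lemma~\ref{pure-local.lem} there, rather than attempting to ascend a chain in $B$ from some minimal prime --- the latter approach would run into the possibility that a proposed bottom prime of $B$ is already minimal, leaving no room to descend further.
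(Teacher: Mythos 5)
Your proof is correct and is essentially the paper's argument: both rest on iterating Lemma~\ref{pure-local.lem} (applied to $A_\fp\to B\otimes_A A_\fp$) to descend from the top of a chain of primes in $A$ while preserving purity of the localized map at each stage. The paper simply unwinds your induction on $\dim A_\fp$ into an explicit construction of a chain $P_0\supsetneq P_1\supsetneq\cdots\supsetneq P_d$ in $B$ lying over the given chain in $A$; the two formulations are interchangeable.
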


\begin{proof}
  Let $\fp_0\supsetneq \fp_1\supsetneq \fp_2\supsetneq \cdots\supsetneq \fp_d$ be
  a chain of prime ideals of $A$.
  Note that $A_{\fp_0}\rightarrow B_{\fp_0}$ is pure by \cite[(6.2)]{HR}.
  By Lemma~\ref{pure-local.lem}, there exists some prime ideal $P_0$ of $B$ lying
  over $\fp_0$ such that $A_{\fp_0}\rightarrow (B_{\fp_0})_{P_0B_{\fp_0}}=B_{P_0}$ is pure.
  Using this argument to the pure ring homomorphism
  $A_{\fp_0}\rightarrow B_{P_0}$, we know that there exists some prime ideal $P_1\subset P_0$
  such that $P_1\cap A=\fp_1$ and $A_{\fp_1}\rightarrow B_{P_1}$ is pure.
  Continuing this, we can take a chain of prime ideals
  \[
  P_0\supsetneq P_1\supsetneq P_2\supsetneq \cdots \supsetneq P_d
  \]
  such that $P_i\cap A=\fp_i$ and $A_{\fp_i}\rightarrow B_{P_i}$ is pure.
  This proves the lemma.
\end{proof}

\begin{proposition}\label{depth.prop}
  Let $f:(A,\fm)\rightarrow (B,\fn)$ be a pure local ring homomorphism such that
  $B$ is Noetherian and the fiber ring $B/\fm B$ is Artinian.
  Then $\dim A=\dim B$ and $\depth A\geq \depth B$.
\end{proposition}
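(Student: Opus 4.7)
My plan is to handle the two conclusions separately. For the dimension equality, $\dim A\le\dim B$ is Lemma~\ref{dim1.lem}. For the reverse, the key observation is that $B/\fm B$ is a local Artinian ring (local because $B$ is local, Artinian by hypothesis), so its maximal ideal $\fn/\fm B$ is nilpotent and $\sqrt{\fm B}=\fn$. Choosing a system of parameters $x_1,\ldots,x_d\in\fm$ of $A$ with $d=\dim A$, we obtain $\sqrt{(x_1,\ldots,x_d)B}=\sqrt{\fm B}=\fn$, so $(x_1,\ldots,x_d)B$ is $\fn$-primary and Krull's height theorem yields $\dim B\le d=\dim A$.

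For the depth inequality $\depth A\ge\depth B$, I would induct on $r=\depth B$, the case $r=0$ being trivial. When $r\ge 1$, the preliminary claim $\depth A\ge 1$ follows by contradiction: if $\fm\in\Ass_A A$, pick a nonzero $a\in A$ with $\fm a=0$. Then $f(a)\neq 0$ by purity and $\fm B\cdot f(a)=0$, so $\Hom_A(A/\fm,B)\neq 0$ and hence $\depth_\fm B=0$. But $\depth_\fm B=\depth_\fn B=\depth B$ because $\sqrt{\fm B}=\fn$ and local cohomology depends only on the radical, contradicting $r\ge 1$.

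The crux of the induction is then to locate $x\in\fm$ that is a nonzerodivisor on both $A$ and $B$. By prime avoidance in $A$ applied to the finite collection $\Ass_A A\cup\{P\cap A:P\in\Ass_B B\}$, it suffices that $\fm$ lies in none of these primes. It avoids each $\fp\in\Ass_A A$ because $\depth A\ge 1$; and $\fm\subseteq P\cap A$ forces $\fn=\sqrt{\fm B}\subseteq P$, hence $P=\fn$, which is excluded because $\fn\notin\Ass_B B$ (since $\depth B\ge 1$). With such an $x$ in hand, the base-change $A/xA\to B/xB$ is again pure (purity is preserved under base change), local, with the same Artinian fiber $B/\fm B$, and satisfies $\depth(B/xB)=r-1$. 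The induction hypothesis gives $\depth(A/xA)\ge r-1$, and $A$-regularity of $x$ upgrades this to $\depth A\ge r=\depth B$. The real obstacle is exactly this common-nonzerodivisor step, which hinges on the identity $\sqrt{\fm B}=\fn$ concentrating all extensions of $\fm$ to $B$ at the single prime $\fn$.
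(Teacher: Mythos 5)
Your proof is correct and follows essentially the same route as the paper: induction on $\depth B$, a contradiction argument ruling out $\depth A=0$ when $\depth B>0$, and prime avoidance to find an element of $\fm$ regular on both $A$ and $B$, using that any $P\in\Ass B$ contracting to $\fm$ would have to equal $\fn$. The only cosmetic differences are that you derive $\dim B\le\dim A$ directly from $\sqrt{\fm B}=\fn$ and a system of parameters where the paper invokes $\dim B\le\dim A+\dim B/\fm B$, and you phrase the depth-zero contradiction via radical-invariance of depth rather than via $\Ass_B(Bx)=\{\fn\}$.
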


\begin{proof}
  We have $\dim A\leq \dim B\leq \dim A+\dim B/\fm B=\dim A$ by Lemma~\ref{dim1.lem}
  and the assumption that $\dim B/\fm B$ is Artinian.

  Let $r=\depth A$ and $s=\depth B$.
  We prove that $r\geq s$ by induction on $s$.
  If $s=0$, then the assertion is trivial.
  Assume that $s>0$.
  If $r=0$, then there exists some $x\in \fm$ such that $0:_A x=\fm$.
  As $f$ is an injective map, $\fm B\subset 0:_Bx\subset \fn$.
  As $B/\fm B$ is Artinian, $B/(0:_Bx)$ is Artinian and is nonzero.
  Hence
  \[
  \Ass_B B\supset \Ass_B Bx=\Ass_B B/(0:_B x)=\{\fn\},
  \]
  and $s=0$.
  This is a contradiction.
  So $r>0$.
  For each $P\in\Ass B$, we have $P\cap A\neq \fm$, since $\fn \cap A=\fm$, $P\subsetneq \fn$,
  and $B/\fm B$ is zero-dimensional.
  By prime avoidance, there exists some
  \[
  y\in \fm \setminus\left((\bigcup_{P\in\Ass B}(P\cap A))\cup(\bigcup_{\fp\in\Ass A}\fp)\right).
  \]
  So $y$ is $A$-regular $B$-regular.
  By \cite[(6.2)]{HR}, $A/yA\rightarrow B/yB$ is pure, and $(B/yB)/\fm(B/yB)$ is
  Artinian.
  By induction assumption, we have
\[
r-1=\depth A/yA \geq \depth B/yB=s-1
\]
  and hence $r\geq s$, as required.
\end{proof}

\begin{theorem}[Hochster--Eagon \cite{HE}]\label{he.thm}
  Let $f:A\rightarrow B$ be a pure ring homomorphism.
  Let $B$ be Noetherian and Cohen--Macaulay.
  If the fiber ring $B\otimes_A \kappa(\fm)$ is zero-dimensional for any maximal
  ideal $\fm$ of $A$, then $A$ is Cohen--Macaulay.
\end{theorem}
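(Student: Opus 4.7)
The plan is to reduce the global statement to a statement about each local ring $A_\fm$ at a maximal ideal, and then apply Proposition~\ref{depth.prop} together with the Cohen--Macaulay hypothesis on $B$. First, $A$ is Noetherian by \cite[(6.15)]{HR}, so Cohen--Macaulayness of $A$ is equivalent to $A_\fm$ being Cohen--Macaulay for every maximal ideal $\fm$ of $A$. Fix such an $\fm$.

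Second, I would produce a suitable pure local homomorphism over $\fm$. By Lemma~\ref{pure-local.lem} there is a maximal ideal $\fn$ of $B$ with $\fn\cap A=\fm$ and $A\rightarrow B_\fn$ pure. Every $s\in A\setminus\fm$ maps to $B\setminus\fn$, hence becomes a unit in $B_\fn$; so localizing the source at $\fm$, the induced map $A_\fm\rightarrow B_\fn$ is again pure by \cite[(6.2)]{HR}, and it is a local homomorphism because any pure map between local rings is local.

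Third, I would verify the Artinian fiber hypothesis needed to invoke Proposition~\ref{depth.prop}. Since $\fm$ is maximal, $\kappa(\fm)=A/\fm$, so the assumption gives $B\otimes_A\kappa(\fm)=B/\fm B$ zero-dimensional; being Noetherian and zero-dimensional, $B/\fm B$ is Artinian, and its localization $B_\fn/\fm B_\fn=(B/\fm B)_\fn$ is Artinian as well. Proposition~\ref{depth.prop} then yields
\[
\dim A_\fm=\dim B_\fn\qquad\text{and}\qquad \depth A_\fm\geq\depth B_\fn.
\]

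Finally, since $B$ is Cohen--Macaulay, so is $B_\fn$, hence $\depth B_\fn=\dim B_\fn$, and combining the two conclusions gives $\depth A_\fm\geq\dim A_\fm$. The reverse inequality is automatic, so $A_\fm$ is Cohen--Macaulay for every maximal $\fm$, and $A$ is Cohen--Macaulay. There is no real obstacle here: all the work has been done in Lemma~\ref{pure-local.lem} and Proposition~\ref{depth.prop}; the only thing to be careful about is the bookkeeping when passing from the pure map $A\rightarrow B_\fn$ to the pure local map $A_\fm\rightarrow B_\fn$ and checking that the Artinian fiber hypothesis survives this localization.
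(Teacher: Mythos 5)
Your proof is correct and follows essentially the same route as the paper: reduce to the local case at a maximal ideal $\fm$, use Lemma~\ref{pure-local.lem} to obtain a pure local homomorphism $A_\fm\rightarrow B_\fn$ with Artinian closed fiber, and then combine Proposition~\ref{depth.prop} with the Cohen--Macaulayness of $B_\fn$. The only difference is cosmetic bookkeeping (you localize $A$ after choosing $\fn$, while the paper localizes both rings at $\fm$ first), and your extra care in checking that purity and the Artinian-fiber hypothesis survive the localization is exactly what the paper leaves implicit.
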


\begin{proof}
  It suffices to show that $A_\fm$ is a Cohen--Macaulay local ring for any maximal
  ideal $\fm$ of $A$.
  Replacing $f$ by $f_\fm:A_\fm\rightarrow B_\fm$, we may assume that $(A,\fm)$ is local.
  Then we can find a maximal ideal $\fn$ of $B$ lying over $\fm$ such that
  $A\rightarrow B_\fn$ is pure.
  As a localization of a zero-dimensional ring is still zero-dimensional, we may
  assume that $f$ is a local homomorphism between local rings.
  By the Cohen--Macaulay property of $B$ and Proposition~\ref{depth.prop},
  we have
  \[
  \depth A\geq \depth B=\dim B=\dim A\geq \depth A,
  \]
  and the assertion follows.
\end{proof}

\begin{theorem}\label{main.thm}
  Let $f:A\rightarrow B$ be a pure ring homomorphism.
  Let $B$ be a Noetherian ring which satisfies Serre's $(S_n)$ condition.
  If the fiber ring $B\otimes_A \kappa(\fp)$ is zero-dimensional for any prime
  ideal $\fp$ of $A$, then $A$ satisfies $(S_n)$, too.
\end{theorem}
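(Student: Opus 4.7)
The plan is to localize and reduce the statement to Proposition~\ref{depth.prop}, essentially mimicking the strategy used for the Hochster--Eagon theorem in Theorem~\ref{he.thm}, but keeping track of the numerical inequality $\depth\geq\min(n,\dim)$ rather than equality.

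First I would note that $A$ is automatically Noetherian by \cite[(6.15)]{HR}, so the $(S_n)$ condition makes sense. Fix an arbitrary $\fp\in\Spec A$; I need to show $\depth A_\fp\geq \min(n,\dim A_\fp)$. The map $A_\fp\rightarrow B_\fp$ is pure by \cite[(6.2)]{HR}, so by Lemma~\ref{pure-local.lem} applied to the Noetherian local ring $A_\fp$, there exists a maximal ideal of $B_\fp$, corresponding to some prime $P\in\Spec B$ with $P\cap A=\fp$, such that $A_\fp\rightarrow B_P$ is pure.

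Next I would check that the fiber condition survives localization: one has
\[
B_P\otimes_{A_\fp}\kappa(\fp)=(B\otimes_A\kappa(\fp))_P,
\]
and a localization of a zero-dimensional Noetherian ring is again zero-dimensional, hence Artinian. Thus $f_P:A_\fp\rightarrow B_P$ is a pure local homomorphism with Artinian fiber, so Proposition~\ref{depth.prop} yields
\[
\dim A_\fp=\dim B_P\quad\text{and}\quad \depth A_\fp\geq \depth B_P.
\]
Finally, since $B$ satisfies $(S_n)$, $\depth B_P\geq \min(n,\dim B_P)$, and combining these inequalities gives
\[
\depth A_\fp\geq \depth B_P\geq \min(n,\dim B_P)=\min(n,\dim A_\fp),
\]
which is precisely the $(S_n)$ condition at $\fp$.

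There is really no serious obstacle here: the genuine work has been carried out in Proposition~\ref{depth.prop}, and what remains is the bookkeeping of localizing the pure homomorphism while preserving both purity and the Artinian fiber hypothesis. The only subtle point to verify carefully is that the prime $P$ produced by Lemma~\ref{pure-local.lem} lies over $\fp$ and that the fiber $B_P/\fp B_P$ is genuinely Artinian; both follow from the fact that zero-dimensionality is preserved under localization of Noetherian rings.
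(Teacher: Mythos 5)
Your proof is correct and follows essentially the same route as the paper: localize at $\fp$, use Lemma~\ref{pure-local.lem} to produce a prime $P$ over $\fp$ with $A_\fp\rightarrow B_P$ pure and Artinian fiber, and apply Proposition~\ref{depth.prop}. The only (cosmetic) difference is that you conclude via the direct inequality chain $\depth A_\fp\geq\depth B_P\geq\min(n,\dim B_P)=\min(n,\dim A_\fp)$, whereas the paper uses the equivalent characterization ``$\depth A_\fp<n$ implies $A_\fp$ is Cohen--Macaulay'' and routes the last step through Theorem~\ref{he.thm}.
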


\begin{proof}
  Let $\fp$ be a prime ideal of $A$ such that $\depth A_\fp<n$.
  Then we can find a prime ideal $P$ of $B$ lying over $\fp$
  such that the local homomorphism $A_\fp\rightarrow B_P$ is pure.
  Note that the dimension of the fiber ring $\kappa(\fp)\otimes_{A_\fp}B_P$ is
  zero-dimensional.
  Since
  $\depth B_P\leq\depth A_\fp<n$ by Proposition~\ref{depth.prop} and $B$ satisfies the $(S_n)$ condition, we have
  that $B_P$ is Cohen--Macaulay.
  By Theorem~\ref{he.thm}, we have that $A_\fp$ is Cohen--Macaulay.
  This shows that $A$ satisfies the $(S_n)$ condition.
\end{proof}

\begin{corollary}\label{main.cor}
  Let $G$ be a finite group acting on a Noetherian ring $B$.
  Assume that the order $|G|$ of $G$ belongs to $B^\times$, the unit group of $B$.
  If $B$ satisfies Serre's $(S_n)$ condition, then so does the ring of invariants $B^G$.
\end{corollary}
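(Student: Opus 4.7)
The plan is to reduce immediately to Theorem~\ref{main.thm} applied to the inclusion $f:A\hookrightarrow B$ with $A=B^G$. To invoke that theorem, two nontrivial hypotheses must be verified: purity of $f$ (which, as noted earlier in the paper, automatically forces $A$ to be Noetherian, so that the conclusion $(S_n)$ even makes sense for $A$), and zero-dimensionality of every fiber ring $B\otimes_A\kappa(\fp)$. The Noetherian/$(S_n)$ assumption on $B$ is immediate from the hypotheses.

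For purity, I would exhibit a Reynolds-type splitting. Define $\rho:B\to A$ by $\rho(b)=\frac{1}{|G|}\sum_{g\in G}g(b)$; this is well-defined because $|G|\in B^\times$, and its image lies in $A$ because $G$ permutes the summands. A routine check gives that $\rho$ is $A$-linear with $\rho\circ f=\id_A$, so $A$ is an $A$-module direct summand of $B$. A split monomorphism of $A$-modules remains split, hence injective, after tensoring with any $A$-module, so $f$ is pure.

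For the fiber rings, I would invoke the classical observation that $B$ is integral over $A$: each $b\in B$ is a root of the monic polynomial $\prod_{g\in G}(x-g(b))\in A[x]$, whose coefficients are $G$-invariant elementary symmetric expressions in the orbit of $b$. For an integral extension, primes in a single fiber of $\Spec B\to\Spec A$ are pairwise incomparable, so each fiber ring $\kappa(\fp)\otimes_A B$ has Krull dimension zero.

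With purity and zero-dimensional fibers in hand, Theorem~\ref{main.thm} yields the conclusion directly. I do not anticipate a substantial obstacle: the corollary is essentially a packaging of the main theorem in the classical non-modular invariant setting, and the two tools invoked (averaging by $|G|^{-1}$ to produce a splitting, and invariant monic polynomials to produce integrality) are standard.
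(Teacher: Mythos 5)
Your proposal is correct and follows essentially the same route as the paper: integrality via the invariant monic polynomial $\prod_{g\in G}(t-g(b))$ to get zero-dimensional fibers, and the Reynolds operator $|G|^{-1}\sum_{g\in G}g$ as a $B^G$-linear splitting to get purity, followed by an appeal to Theorem~\ref{main.thm}. No gaps.
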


\begin{proof}
  For $b\in B$, the polynomial $f(t)=\prod_{g\in G}(t-gb)$ is monic and lies in $B^G[t]$.
  As $f(b)=0$, the element $b$ is integral over $B^G$.
  This shows that $B^G\hookrightarrow B$ is an integral extension, and all the fiber rings are zero-dimensional.

  Let $R=\ZZ[|G|^{-1}]$, and define the Reynolds operator to be
  $\rho=|G|^{-1}\sum_{g\in G}g\in RG$.
  It is a central idempotent of the group algebra $RG$ of $G$ over $R$.
  As $RG$ acts on $B$, the $G$-linear map $\rho:B\rightarrow B$ is defined.
  It is easy to see that $\rho(B)\subset B^G$ and $\rho$ is the identity map on $B^G$.
  So $\rho(B)=B^G$, and $\rho:B\rightarrow B^G$ is the left inverse of the inclusion
  $j:B^G\rightarrow B$.
  Moreover,
  \[
  \rho(ab)=|G|^{-1}\sum_{g\in G}a(gb)=a(\rho b)
  \]
  for $a\in B^G$ and $b\in B$.
  Namely, $\rho$ is the left inverse of $j$ as a $B^G$-linear map, and thus $B^G$ is a
  direct summand subring of $B$.
  It follows that $B^G$ is a pure subring of $B$.
  The corollary follows from Theorem~\ref{main.thm}.
\end{proof}

\begin{flushleft}
Mitsuyasu HASHIMOTO\\
Department of Mathematics\\
Osaka Metropolitan University\\
Sumiyoshi-ku, Osaka 558--8585, JAPAN\\
e-mail: {\tt mh7@omu.ac.jp}
\end{flushleft}


\begin{thebibliography}{GHKN}
\def\ji#1#2(#3)#4-#5.{\newblock{\em#1} {\bf#2} (#3), #4--#5.}
\def\GTM#1{Graduate Texts in Math. {\bf #1}, Springer}
\def\SLN#1{Lecture Notes in Math. {\bf #1}, Springer}

\bibitem[Bou]{Boutot}
  J.-F. Boutot,
  Singularit\'e rationnelles et quotients par les groupes r\'eductifs,
  {\em Invent. Math.} {\bf 88} (1987), 65--68.


\bibitem[HE]{HE}
  M. Hochster and J. A. Eagon,
  Cohen--Macaulay rings, invariant theory, and the generic perfection of determinantal loci,
  {\em Amer. J. Math.} {\bf 93} (1971), 1020--1058.

\bibitem[HH]{HH}
  M. Hochster and C. Huneke,
  Applications of the existence of big Cohen--Macaulay algebras,
  {\em Adv. Math.} {\bf 113} (1995), 45--117.

\bibitem[HR]{HR}
  M. Hochster and J. Roberts,
  Rings of invariants of reductive groups action on regular rings are Cohen--Macaulay,
  {\em Adv. Math.} {\bf 13} (1974), 115--175.

\bibitem[Kaw]{Kawasaki}
  Takesi Kawasaki,
  On arithmetic Macaulayfication of Noetherian rings,
  {\em Trans. Amer. Math. Soc.} {\bf 354} (2002), 123--149.

\bibitem[Sch]{Schoutens}
H. Schoutens,
Pure subrings of regular rings are pseudo-rational,
{\em Trans. Amer. Math. Soc.} {\bf 360} (2008), 609--627.

\end{thebibliography}
\end{document}